\documentclass[a4paper]{amsart}
\usepackage{amsthm,amsfonts,amsmath,amssymb}
\usepackage[abs]{overpic}
\usepackage{comment} 

\newtheorem{theorem}{Theorem}[section]
\newtheorem{lemma}[theorem]{Lemma}

\newtheorem{proposition}[theorem]{Proposition}
\newtheorem{corollary}[theorem]{Corollary}

\theoremstyle{definition}

\newtheorem{remark}[theorem]{Remark}

\theoremstyle{remark}
\newtheorem*{acknowledgements}{Acknowledgements}

\newcommand{\Z}{\mathbb{Z}}

\makeatletter

\makeatletter

\@addtoreset{table}{section}
\makeatother

\makeatletter
  
  \@addtoreset{equation}{section}
\makeatother

\begin{document}
\title{A note on Fox colorings of virtual tangles}

\author[T. NAKAMURA]{Takuji NAKAMURA}
\address{Faculty of Education, 
University of Yamanashi,
Takeda 4-4-37, Kofu, Yamanashi 400-8510, Japan}
\email{takunakamura@yamanashi.ac.jp}

\author[Y. NAKANISHI]{Yasutaka NAKANISHI}
\address{Department of Mathematics, Kobe University, 
Rokkodai-cho 1-1, Nada-ku, Kobe 657-8501, Japan}
\email{nakanisi@math.kobe-u.ac.jp}

\author[S. SATOH]{Shin SATOH}
\address{Department of Mathematics, Kobe University, 
Rokkodai-cho 1-1, Nada-ku, Kobe 657-8501, Japan}
\email{shin@math.kobe-u.ac.jp}

\author[K. Wada]{Kodai Wada}
\address{Department of Mathematics, Kobe University, Rokkodai-cho 1-1, Nada-ku, Kobe 657-8501, Japan}
\email{wada@math.kobe-u.ac.jp}

\makeatletter
\@namedef{subjclassname@2020}{%
  \textup{2020} Mathematics Subject Classification}
\makeatother
\subjclass[2020]{57K10, 57K12}

\keywords{Fox coloring, tangle, braid, Hurwitz action}

\thanks{This work was supported by JSPS KAKENHI Grant Numbers 
JP20K03621, JP22K03287, and JP23K12973.}



\begin{abstract}
We study Fox colorings of tangle diagrams 
by $R=\mathbb{Z}$ or $\mathbb{Z}/p\mathbb{Z}$, 
where $p\geq3$ is an odd integer. 
For an $R$-colored $m$-string tangle diagram, 
the colors at the $2m$ boundary points 
form a vector $v\in R^{2m}$. 
We show that 
for classical tangle diagrams, 
such vectors are completely characterized 
by the alternating sum condition $\Delta(v)=0$. 
We then investigate how this restriction changes in the virtual setting. 
For $R=\mathbb{Z}$, 
the realizability of $v$ is determined by 
a divisibility condition on $\Delta(v)$. 
For $R=\mathbb{Z}/p\mathbb{Z}$, 
every vector is realizable by a virtual tangle diagram. 
\end{abstract}

\maketitle

\section{Introduction} 

In 1970, Conway \cite{Con} introduced the notion of tangles 
as a method for studying knots through decomposition. 
Tangles provide a language for analyzing how local properties 
of a knot influence its global structure. 
For example, Krebes~\cite{Kre} proved that 
if a tangle $T$ embeds in a knot $K$, 
then any integer dividing the determinants of 
both numerator and denominator closures of $T$ 
also divides the determinant of $K$. 

Fox colorings~\cite{Fox} provide a fundamental tool in knot theory. 
Although their definition is elementary, 
Fox colorings capture significant information about knots. 
For instance, the $\mathbb{Z}/p\mathbb{Z}$-coloring number 
is an effective invariant for distinguishing knots and 
gives a lower bound for the unknotting number~\cite{Prz}.

Fox colorings also play an important role in the study of tangles. 
Silver and Williams~\cite{SW} extended the above result of Krebes 
to virtual tangles and virtual knots 
using $\Z/p\Z$-colorings. 
Furthermore, Kauffman and Lambropoulou~\cite{KL} 
utilized $\Z$-colorings to give a combinatorial proof 
of Conway's classification of rational tangles. 
These results suggest that colorings provide a useful tool 
for understanding structural properties of tangles. 

A notable observation by Przytycki~\cite{Prz} 
states that if a $1$-string classical tangle diagram  
is $\mathbb{Z}/3\Z$-colored, 
then the two endpoints must receive the same color. 
By contrast, Silver and Williams~\cite{SW} demonstrated that 
for any $a,b\in\Z/3\Z$, 
there exists a $\mathbb{Z}/3\Z$-colored $1$-string virtual tangle diagram 
whose endpoints receive the colors $a$ and $b$. 
This phenomenon reveals a striking contrast 
between classical and virtual tangle diagrams. 

Motivated by this contrast, 
we investigate how the restrictions on Fox colorings 
differ between classical and virtual tangle diagrams.
Let $R=\Z$ or $\Z/p\Z$, where $p$ is an odd integer. 
Let $T$ be a tangle diagram 
consisting of $m\geq1$ strings in a $2$-disk, 
and $C$ an $R$-coloring of $T$. 
For an $R$-colored tangle diagram $(T,C)$, 
the colors appearing at the $2m$ boundary points of $T$ 
in cyclic order along the boundary 
form the \emph{boundary color vector}
\[
v=(a_1,\dots,a_{2m})\in R^{2m}, 
\]
which is denoted by $v=\partial(T,C)$. 

For classical tangle diagrams, 
the boundary color vectors are not arbitrary. 
It is known~\cite{Prz} that 
the boundary color vector $v$ satisfies 
\[
\Delta(v)=\sum_{i=1}^{2m}(-1)^{i-1}a_i=0\in R.
\]
See the left-hand side of Figure~\ref{fig:example}. 
In this paper, we show that this condition completely characterizes 
the boundary color vectors arising from classical $R$-colored tangles.

\begin{figure}[htb]
  \centering
    \begin{overpic}[]{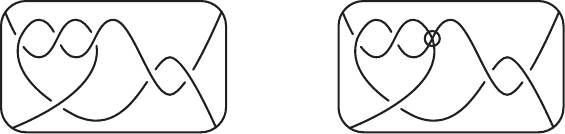}
      \put(15,-20){$\Delta(0,-5,-1,4)=0$}
      \put(171,-20){$\Delta(0,3,3,2)=-2\ne 0$}
      \put(-5,60){$0$}
      \put(108,60){$-5$}
      \put(107,-5){$-1$}
      \put(-4,-5){$4$}
      \put(3,28){$1$}
      \put(16.5,28){$2$}
      \put(33.5,28){$3$}
      \put(67.5,43){$3$}
      \put(67.5,9){$7$}
      \put(157,60){$0$}
      \put(272,60){$3$}
      \put(272,-5){$3$}
      \put(158,-5){$2$}
      \put(165,28){$1$}
      \put(179.5,28){$2$}
      \put(196.5,28){$3$}
      \put(230,43){$3$}
      \put(230,9){$3$}
    \end{overpic}
  \vspace{1.5em}
  \caption{${\Z}$-colored classical and virtual tangle diagrams}
  \label{fig:example}
\end{figure}

\begin{proposition}\label{prop11}
For a vector $v\in R^{2m}$, 
the following are equivalent. 
\begin{enumerate}
\item
There exists an $R$-colored classical tangle diagram $(T,C)$ 
such that $\partial(T,C)=v$. 
\item
$\Delta(v)=0$. 
\end{enumerate}
Moreover, such a diagram $T$ can be chosen to satisfy 
the property $\mathrm{P}_1$ as follows: 
\begin{itemize}
\item[$\mathrm{P}_1$:] 
$T$ can be transformed into a trivial tangle diagram 
by a finite sequence of moves 
shown in {\rm Figure~\ref{fig:interchange}(a)}, 
each of which interchanges adjacent endpoints of strings 
along the boundary of the $2$-disk containing $T$. 
\end{itemize}
\end{proposition}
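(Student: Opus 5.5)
For (1)$\Rightarrow$(2) I will use the standard argument: give the disk a checkerboard-style labelling in which the arcs of the boundary between consecutive endpoints alternate in sign. Concretely, orient each string and look at how $\Delta$ changes under local moves. A cleaner route is to show that $\Delta(\partial(T,C))$ is additive under cutting $T$ along a line that meets it transversely. Split $T$ into elementary pieces: a trivial arc and a single crossing, stacked with trivial strands. Each piece has vanishing alternating sum. For a crossing with over-arc color $b$ and under-arc colors $a$ and $2b-a$, the boundary colors in cyclic order are $a,b,2b-a,b$, so $\Delta=a-b+(2b-a)-b=0$. When two tangles are glued along some boundary points, the shared colors appear with opposite signs (by the parity of their positions), so they cancel. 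Hence $\Delta(v)=0$. I will make the gluing bookkeeping precise by placing the diagram in braid-like position: every tangle is a stack of horizontal slices, each containing caps, cups, crossings, and vertical strands. Then I compute a ``horizontal alternating sum'' for each level and check that it is preserved from level to level.

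For (2)$\Rightarrow$(1), together with $\mathrm P_1$, I will construct $T$ explicitly by induction on $m$. The key local gadget is the move in Figure~\ref{fig:interchange}(a): a half-twist that exchanges two adjacent endpoints. If one endpoint has color $x$ and its neighbor has color $y$, then after the crossing the pair $(x,y)$ on one side becomes $(y,2y-x)$ or $(2x-y,x)$ on the other, depending on the crossing sign. Start from the trivial tangle, which has $m$ caps, each with boundary colors $(c,c)$. Then apply interchange moves, which generate the Hurwitz-type action of the braid group on boundary vectors. The goal is to show that the orbit of the set of trivial-tangle vectors under these moves is exactly $\{v:\Delta(v)=0\}$. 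A tangle built this way undoes to a trivial tangle by reversing the moves, which is $\mathrm P_1$.

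For the reduction I will run the induction backwards. Given $v=(a_1,\dots,a_{2m})$ with $\Delta(v)=0$, I want to apply interchanges, which keep $\Delta=0$, to reach a vector whose first two entries are equal. Peeling off that pair then leaves a vector in $R^{2m-2}$ with $\Delta=0$, and I apply induction. The base case $m=1$ is immediate, since $\Delta=0$ means $a_1=a_2$. To make two adjacent entries equal, I use the fact that repeated twisting of the pair $(x,y)$ produces $(x+k(y-x), x+(k+1)(y-x))$ for all $k\in\Z$. Interchanging positions $2$ and $3$, for instance, then lets me shift $a_2$ by multiples of differences. This is essentially a Euclidean-algorithm argument on the differences $d_i=a_{i+1}-a_i$, which the twists transform by $\mathrm{SL}_2$-type operations.

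The main obstacle is this Euclidean step: I have to show that the differences can always be reduced until some $d_i=0$. Over $\Z$ I will induct on $\sum|d_i|$, or on a gcd-type quantity. Over $\Z/p\Z$ I will lift to $\Z$ and use the same moves, treating $p$ odd carefully. That is where the condition enters: $2$ must be invertible so that the interchange maps are bijections. One wrinkle is that the twist also moves the colors, not just the differences. So I will track the pair $(a_i,d_i)$ and use the three-term relations among consecutive twists to kill a difference while keeping $\Delta$ fixed.
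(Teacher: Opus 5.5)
Your proposal is correct, but it takes a different route from the paper in both directions.

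For (1)$\Rightarrow$(2), the paper (Lemma~\ref{lem31}) lays a large circle under $T$ and goes once around it. This only gives $2\Delta(v)=0$, so the paper then needs $2$ to be a non-zero-divisor in $R$; this is where $p$ odd enters. Your slice argument gives $\Delta(v)=0$ directly, and it is the Morse-position version you should write down, not the checkerboard or gluing sketches. At a crossing, $(x,y)\mapsto(y,2y-x)$ or $(2x-y,x)$ preserves $x-y$. A cap or cup inserts or deletes two adjacent equal entries without changing the parity of later positions. So no hypothesis on $R$ is needed.

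For (2)$\Rightarrow$(1), the paper quotes the normal form of Lemma~\ref{lem23}: it braids $v$ to $(a,\dots,a,b,\dots,b)$ and closes with caps. It treats $\Z/p\Z$ by lifting to an integer vector with alternating sum $0$ in $\Z$. You instead prove directly, by induction on $m$, that $v$ can be braided so that $a_1=a_2$; you cap that pair and recurse on positions $3,\dots,2m$. Concatenating gives a braid in $B_{2m}$ followed by caps at $(2i-1,2i)$, so $\mathrm{P}_1$ holds exactly as in the paper.

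Your route is self-contained and works verbatim over every $\Z/n\Z$. The paper's is shorter given the cited Hurwitz classification, and it produces a stronger normal form than the proposition needs.

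Two points to tidy up.

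First, the Euclidean step you call the main obstacle is routine and needs no tracking of colors. On the differences $d_j=a_{j+1}-a_j$, the generator $\sigma_i$ acts by $(d_{i-1},d_i,d_{i+1})\mapsto(d_{i-1}+d_i,\,d_i,\,d_{i+1}-d_i)$. Hence $\sigma_1^k$ gives $d_2\mapsto d_2-kd_1$ and $\sigma_2^k$ gives $d_1\mapsto d_1+kd_2$.
\begin{itemize}
\item Run Euclid on the pair $(d_1,d_2)$ with the measure $|d_1|+|d_2|$. Do not use $\sum_j|d_j|$, which need not drop because $\sigma_2$ also changes $d_3$.
\item The algorithm ends with $d_1=0$ or $d_2=0$. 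In the latter case, $\sigma_1\sigma_2$ turns $(x,c,c,\ldots)$ into $(c,c,x,\ldots)$.
\item The hypothesis $\Delta(v)=0$ is not used here, only at the base case $m=1$.
\end{itemize}

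Second, oddness of $p$ is not what makes the interchange maps bijective: $(x,y)\mapsto(2x-y,x)$ inverts $(x,y)\mapsto(y,2y-x)$ over any ring. In fact your argument never uses that $p$ is odd. Over $\Z/p\Z$ you can run the same Euclid on integer representatives of the differences, because the action on differences has integer coefficients.
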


\begin{figure}[htb]
  \centering
    \begin{overpic}[]{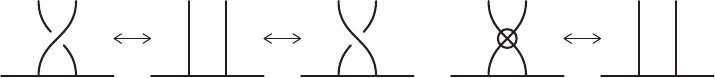}
      \put(94,-13){(a)}
      \put(275,-13){(b)}
    \end{overpic}
  \vspace{1em}
  \caption{Two moves (a) and (b)}
  \label{fig:interchange}
\end{figure}

While the condition that $\Delta(v)=0$ in Proposition~\ref{prop11} 
characterizes the classical case, 
the situation for virtual tangle diagrams is less restrictive. 
See the right-hand side of Figure~\ref{fig:example}. 
The main purpose of this paper is to investigate 
how this restriction changes for virtual tangle diagrams. 
More precisely, we determine exactly which 
boundary color vectors arise from $R$-colored virtual tangles.

We first consider the case $R=\Z$. 
We say that a vector $v=(a_1,\dots,a_{2m})\in\Z^{2m}$ 
is \emph{trivial} if $a_1=\dots=a_{2m}$, 
and \emph{nontrivial} otherwise. 
For a nontrivial $v$, 
we define 
\[d(v)=\gcd\{a_i-a_j\mid 1\leq i\ne j\leq 2m\}>0,\] 
which can be uniquely written as $d(v)=2^{k(v)}(2n+1)$
with $k(v),n\ge 0$. 
Our main theorem states that the realizability of $v$ 
as the boundary color vector of a ${\Z}$-colored virtual tangle diagram is 
determined by $\Delta(v)$ and $k(v)$.

\begin{theorem}\label{thm12}
For a vector $v\in\Z^{2m}$, 
the following are equivalent. 
\begin{enumerate}
\item
There exists a $\Z$-colored virtual tangle diagram $(T,C)$ 
such that $\partial(T,C)=v$. 
\item 
Either 
\begin{enumerate}
\item
$v$ is trivial, or 
\item
$v$ is nontrivial and 
$\Delta(v)$ is divisible by $2^{k(v)+1}$. 
\end{enumerate}
\end{enumerate}
Moreover, if $m\geq 2$, 
such a diagram $T$ can be chosen 
to satisfy the properties $\mathrm{P}_2$ and $\mathrm{P}_3$ 
as follows: 
\begin{itemize}
\item[$\mathrm{P}_2$:] 
$T$ has at most one virtual crossing. 
\item[$\mathrm{P}_3$:] 
$T$ can be transformed into a trivial tangle diagram 
by a finite sequence of moves 
shown in {\rm Figure~\ref{fig:interchange}(a)} and {\rm (b)}, 
each of which interchanges adjacent endpoints of strings 
on the boundary of the $2$-disk containing $T$. 
\end{itemize}
If $m=1$, every boundary color vector $v\in\Z^2$ is trivial. 
\end{theorem}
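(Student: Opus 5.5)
The plan is to prove necessity (1)$\Rightarrow$(2) by a congruence argument on the colors, and sufficiency (2)$\Rightarrow$(1) for $m\ge2$ by combining Proposition~\ref{prop11} with a single virtual crossing. The case $m=1$ then follows from necessity. Throughout, translating all colors by a constant $c$ preserves being an $\Z$-coloring and replaces $v$ by $v-(c,\dots,c)$. When $m$ is an integer this leaves $\Delta(v)$, $d(v)$ and $k(v)$ unchanged, so we may normalize $a_1=0$.

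\emph{Sufficiency for $m\ge2$.} Work with boundary vectors under two operations on adjacent entries $(a_i,a_{i+1})$. A classical crossing near the boundary (move (a)) acts as a Hurwitz-type move $(a_i,a_{i+1})\mapsto(a_{i+1},2a_{i+1}-a_i)$ or its inverse. A virtual crossing (move (b)) is the plain swap $(a_i,a_{i+1})\mapsto(a_{i+1},a_i)$. The Hurwitz move preserves $\Delta$, whereas the swap changes $\Delta$ by $\pm2(a_i-a_{i+1})$.

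Given $v$ with $\Delta(v)=2^{k+1}t$, the goal is a vector $w$ with $\Delta(w)=0$ that becomes $v$ after one swap. This requires an adjacent pair with $a_i-a_{i+1}=\pm\Delta(v)/2=\pm 2^k t$. I will first use Hurwitz moves, which are realizable by classical crossings and preserve $\Delta$ and $d$, in a Euclidean-algorithm fashion. The aim is to bring $v$ to a normal form in which some adjacent difference equals $d(v)$ and the remaining entries are adjustable.

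I then need to solve for $w$: it must be obtained from $v$ by swapping one adjacent pair, and it must have $\Delta(w)=0$. The key arithmetic point is that $\Delta(v)/2=2^k t$ is a multiple of $2^k$. Since $d=2^k(2n+1)$, the adjacent differences realizable inside the $\Delta$-preserving orbit range over all multiples of $2^k$ lying in the right class. I will check this by an explicit computation with $m=2$ blocks $(0,x,y,z)$, padding the remaining $2m-4$ entries with a trivially colored portion.

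Finally, Proposition~\ref{prop11} gives a classical tangle satisfying P$_1$ with boundary vector $w$. Adding one virtual crossing adjacent to the boundary yields P$_2$ and P$_3$.

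\emph{Necessity.} This is the step I expect to be the main obstacle. Normalize $a_1=0$, so every boundary color is divisible by $2^k$. Interior arcs can carry arbitrary colors, so one cannot simply reduce all colors mod $2^k$. My first attempt will be to decompose $\Delta(v)$ string by string:
\[
\Delta(v)=\sum_{\text{strings}}\pm\bigl(a_{\mathrm{end}}\mp a_{\mathrm{start}}\bigr),
\]
with signs determined by the parities of the endpoint positions. Strings with both endpoints at odd positions and strings with both endpoints at even positions occur in equal numbers. Along each string the color changes by $2(a-b)$ at each undercrossing, where $a$ is the over-arc color and $b$ the incoming color.

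I will then reduce the coloring modulo $2^{k+1}$ and look for a crossing-local quantity for the dihedral quandle $\Z/2^{k+1}\Z$ whose total over the diagram equals $\Delta(v)$ modulo $2^{k+1}$. A candidate is the sum over classical crossings of $2(a-b)$ weighted by signs, which should vanish modulo $2^{k+1}$ when the boundary lies in $2^k\Z$. The difficulty is that virtual crossings destroy the checkerboard argument used in the classical case. I would therefore fall back on an induction on $k$: first show that all arcs of each string are congruent modulo $2$, then lift this to congruences modulo higher powers of $2$, using that the boundary colors are divisible by $2^k$.

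\emph{The case $m=1$.} Necessity settles this directly. For $v=(a_1,a_2)$ nontrivial we have $d=|a_1-a_2|=2^k(2n+1)$ and $\Delta(v)=a_1-a_2$. This is not divisible by $2^{k+1}$, so no $\Z$-colored virtual tangle diagram realizes a nontrivial $v$, and every boundary color vector in $\Z^2$ is trivial.
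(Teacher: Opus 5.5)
Your architecture is the paper's: a $2$-adic argument along strings for necessity, and Hurwitz moves plus one virtual swap plus Proposition~\ref{prop11} for sufficiency. Deriving the case $m=1$ from necessity via $\Delta(v)=\pm d(v)$ is a valid shortcut around Proposition~\ref{prop43}. But in both directions the step that carries the content is missing.

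\textbf{Necessity.} The obstacle you name, that interior arcs can carry arbitrary colors so nothing reduces mod $2^k$, is exactly what has to be ruled out, and for tangles without closed components it is false. The paper's Lemma~\ref{lem41} proves two statements together by induction on $s\le k(v)$:
\begin{itemize}
\item[(i)$_s$] every arc satisfies $C(x)\equiv a_1 \pmod{2^s}$;
\item[(ii)$_s$] arcs on a common string agree mod $2^{s+1}$.
\end{itemize}
The step (ii)$_s\Rightarrow$(i)$_{s+1}$ uses that every string has an endpoint on $\partial D^2$, together with the fact that all boundary colors agree mod $2^k$. Remark~\ref{rem46} shows the theorem genuinely fails once closed loops are allowed. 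The step (i)$_{s+1}\Rightarrow$(ii)$_{s+1}$ applies $C(x)-C(z)=2(C(y)-C(z))$ with $C(y)\equiv C(z)\pmod{2^{s+1}}$. This needs control of the over-arc $y$, which lies on a different string, and only (i) provides it. So a mod-$2$ statement along strings cannot simply be ``lifted'' on its own. Once the endpoints $a_p,a_q$ of each string agree mod $2^{k+1}$, your string-by-string decomposition does finish the proof. A same-parity contribution $a_p+a_q=2a_p+(a_q-a_p)$ vanishes mod $2^{k+1}$ because $a_p\in 2^k\Z$. This is a fine alternative to the paper's pairing of the $b_i$ in Lemma~\ref{lem42}. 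Your crossing-sum ``candidate'' is unnecessary and, as stated, unproved.

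\textbf{Sufficiency.} You defer to an unperformed computation the existence, in the Hurwitz orbit of $v$, of a vector with an adjacent pair whose swap makes $\Delta=0$. The justification you offer is false. All entries of $v$ lie in $a_1+d(v)\Z$, and $(a_i,a_{i+1})\mapsto(a_{i+1},2a_{i+1}-a_i)$ preserves this coset. Hence every adjacent difference in the orbit is a multiple of $d(v)$, not an arbitrary multiple of $2^k$. The arithmetic fact actually needed is $\Delta(v)/2\in d(v)\Z$. This follows from $\Delta(v)=d(v)\Delta(w)$ together with the hypothesis, which forces $\Delta(w)$ even (Lemma~\ref{lem21}(ii)); knowing only $\Delta(v)/2\in 2^k\Z$ is not enough.

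Even with that fact, you still have to exhibit the vector. The paper first reduces to $v=(0,\dots,0,\lambda,1,\dots,1)$, with $\lambda$ in position $i$ and $1<i<2m$, using Lemmas~\ref{lem22}, \ref{lem44} and \ref{lem21}(i). It then uses the orbit classification Theorem~\ref{thm24}: vectors with equal $\Delta$, $d$ and $M$ are Hurwitz equivalent. For example, when $i$ is even and $\lambda=2n$, the vector $(0,\dots,0,n,1-n,1,\dots,1)$ lies in the orbit, and swapping its entries $i+1,i+2$ gives $\Delta=0$. The case $i$ odd is handled symmetrically on the left of position $i$. Without Theorem~\ref{thm24} or an explicit braid, this existence claim is unproved. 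Your ``padding'' of the other $2m-4$ entries is also unnecessary: Proposition~\ref{prop11} applies directly to the whole swapped vector.
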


The case $R=\Z/p\Z$ is quite different. 
In this case, the classical restriction disappears completely. 

\begin{corollary}\label{cor13}
For any vector $v\in(\Z/p\Z)^{2m}$, 
there exists a $\Z/p\Z$-colored virtual tangle diagram $(T,C)$ 
such that $\partial(T,C)=v$. 
Moreover, such a diagram can be chosen 
to satisfy the property $\mathrm{P}_2$ when $m=1$, 
and the properties $\mathrm{P}_2$ and $\mathrm{P}_3$ when $m\geq 2$. 
\end{corollary}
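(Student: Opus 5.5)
The plan is to deduce the corollary from Theorem~\ref{thm12}: lift $v$ to a suitable integer vector, realize that lift by a $\Z$-colored virtual tangle, and reduce the colors modulo $p$. Reducing a $\Z$-coloring modulo $p$ gives a $\Z/p\Z$-coloring, since the Fox relation $2b=a+c$ at each classical crossing survives reduction and virtual crossings impose no condition. Reduction does not change the diagram, so properties $\mathrm{P}_2$ and $\mathrm{P}_3$ are inherited. Everything therefore reduces to choosing the lift well.

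\emph{Case $m\ge 2$.} Let $v=(\bar a_1,\dots,\bar a_{2m})$ and choose integer representatives $a_i\in\{0,\dots,p-1\}$. Because $p$ is odd, replacing $a_i$ by $a_i+p$ keeps the residue and flips the parity of $a_i$. So we may prescribe the parities of $a_1$ and $a_2$ freely and leave the other entries fixed. The aim is a lift $w$ that contains both odd and even entries and has an even number of odd entries. Both parities occurring means $w$ is nontrivial and $d(w)$ is odd, so $k(w)=0$. An even number of odd entries means $\Delta(w)\equiv\sum a_i\equiv 0 \pmod 2$. Then condition (b) of Theorem~\ref{thm12} holds, since $2^{k(w)+1}=2$ divides $\Delta(w)$.

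Let $r$ be the number of odd entries among $a_3,\dots,a_{2m}$; this uses $2m\ge4$.
\begin{itemize}
\item If $r$ is odd, make $a_1$ odd and $a_2$ even.
\item If $r$ is even and some $a_j$ with $j\ge3$ is even, make $a_1$ and $a_2$ both odd.
\item If $r=2m-2$, so that all of $a_3,\dots,a_{2m}$ are odd, make $a_1$ and $a_2$ both even.
\end{itemize}
In each case both parities occur and the number of odd entries is even. Theorem~\ref{thm12} then gives a $\Z$-colored virtual tangle diagram with boundary vector $w$ satisfying $\mathrm{P}_2$ and $\mathrm{P}_3$, and we reduce it modulo $p$.

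\emph{Case $m=1$.} Here Theorem~\ref{thm12} gives nothing directly, since $\Z$-boundary vectors in $\Z^2$ are trivial. First, it suffices to realize the endpoint pair $(0,1)$. For any $\alpha,\beta\in\Z/p\Z$ the affine map $x\mapsto\alpha x+\beta$ sends Fox colorings to Fox colorings, and the map $x\mapsto (b-a)x+a$ turns $(0,1)$ into $(a,b)$. If $a=b$, a trivial arc suffices.

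To produce $(0,1)$, I would apply the case $m\ge2$ with $m=2$ to a vector of the form $(0,1,c,c)$, for a suitable $c\in\Z/p\Z$. I would then cap off the last two endpoints by a crossing-free arc colored $c$ lying outside the original disk. This arc adds no virtual crossing, so $\mathrm{P}_2$ is preserved.

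The main obstacle is to make sure the cap joins two \emph{different} strings, so that the result is a single string rather than a string plus a closed component. For this I would use the explicit structure given by $\mathrm{P}_3$: the diagram comes from a trivial tangle by endpoint interchanges, so its connectivity is controlled. I would check, or arrange through the choice of $c$ and of the parity adjustments in the lift, that the boundary points $3$ and $4$ lie on different strings. If this fails, the fallback is to write down an explicit one-string diagram with a single virtual crossing whose endpoint colors differ by a unit, in the spirit of the Silver--Williams example. One would compute its coloring equations directly, and then normalize the endpoint colors to $(0,1)$ by the affine map above.
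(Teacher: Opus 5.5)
Your treatment of $m\ge 2$ is correct and is exactly the paper's argument. You adjust the lift by $+p$ so that both parities occur and the number of odd entries is even, which gives $k(w)=0$ and $2\mid\Delta(w)$; you then apply Theorem~\ref{thm12} and reduce modulo $p$. Your three-case parity bookkeeping is fine.

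The gap is in the case $m=1$. Capping boundary points $3$ and $4$ of a $2$-string diagram yields a $1$-string tangle only if $3$ and $4$ lie on different strings, and you do not establish this. Property $\mathrm{P}_3$ gives no control over how endpoints are paired. The connectivity of the diagram produced by Theorem~\ref{thm12} depends on braids whose existence comes from Lemma~\ref{lem22}, Lemma~\ref{lem23} and Theorem~\ref{thm24}, and none of these is explicit. Your fallback ``explicit one-string diagram'' is also never written down. As it stands, the $m=1$ case is unproved.

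The missing idea is already in the paper. In any $\Z$-colored virtual tangle diagram, all arcs of one string have colors of the same parity, since $C(x)-C(z)=2(C(y)-C(z))$ at each classical crossing (step (ii)$_0$ in the proof of Lemma~\ref{lem41}). So lift $(0,1,c,c)$ to $w=(0,1,c,c+p)\in\Z^4$ with $c\in\Z$:
\begin{itemize}
\item Exactly two entries are odd and both parities occur, so $d(w)=1$, $k(w)=0$, and $\Delta(w)=-(p+1)$ is even.
\item Theorem~\ref{thm12} therefore gives a diagram with boundary vector $w$ and at most one virtual crossing.
\item Because $0$ and $1$ have opposite parity, points $1$ and $2$ lie on different strings, hence so do $3$ and $4$.
\item After reducing modulo $p$, points $3$ and $4$ both carry the color $c$. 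Capping them gives a single string with endpoint colors $(0,1)$ and at most one virtual crossing.
\end{itemize}
Your affine normalization then handles an arbitrary pair $(a,b)$.

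With this repair your route is a valid alternative to the paper's. For $m=1$ the paper simply exhibits an explicit one-string diagram with one virtual crossing (Figure~\ref{fig:1-tangle}), using that $\frac{p+1}{2}$ is an integer. The paper's version is self-contained and explicit. Yours reuses Theorem~\ref{thm12} at the cost of the connectivity argument.
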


The paper is organized as follows. 
Section~\ref{sec2} reviews $R$-colorings of tangle diagrams 
and introduces four quantities associated with a vector $v$: 
the alternating sum $\Delta(v)$, 
the integers $d(v)$ and $k(v)$, 
and a multiset $M(v)$. 
We also collect several results on 
the Hurwitz action of the braid group on $\Z^{2m}$, 
which will be used in the proofs of 
Proposition~\ref{prop11} and Theorem~\ref{thm12}. 
Section~\ref{sec3} studies $R$-colorings of 
classical tangle diagrams and 
proves Proposition~\ref{prop11}. 
Finally, Section~\ref{sec4} investigates $R$-colorings 
of virtual tangle diagrams and 
proves Theorem~\ref{thm12} and Corollary~\ref{cor13}.

\section{Preliminaries}\label{sec2}

\subsection{The boundary color vector} 

Let $m\geq1$ be an integer. 
An \emph{$m$-string virtual tangle diagram} $T$ consists of 
$m$ intervals properly immersed in a $2$-disk $D^2$ 
whose singularities are only transverse double points.  
Each double point of $T$ is designated as 
either a classical crossing (with over-under information)  
or a virtual crossing (indicated by a small circle). 
In particular, $T$ is said to be \emph{classical} 
if it contains no virtual crossings. 
An \emph{arc} of $T$ is a connected component 
obtained by removing all under-crossings from the diagram. 

Let $R=\Z$ or $\Z/p\Z$, 
where $p\geq 3$ is odd. 
An \emph{$R$-coloring} of $T$ is a map  
\[C:\{\text{arcs of $T$}\}\rightarrow R\]
such that at each classical crossing of $T$, 
the relation 
\[C(x)+C(z)=2C(y)\] 
is satisfied, 
where $x$ and $z$ are the two under-arcs and 
$y$ is the over-arc. 
No requirement is imposed at virtual crossings. 
The value $C(x)$ assigned to an arc $x$ is called its \emph{color}, 
and the pair $(T,C)$ is called an \emph{$R$-colored diagram}. 
Let $E_1$ denote 
the $R$-coloring of $T$ 
in which all arcs are assigned the color $1$. 

The set of $R$-colorings of $T$ forms a $\Z$-module. 
For two $R$-colorings $C,C'$ of $T$ 
and $a,b\in R$, 
the linear combination $aC+bC'$ is the $R$-coloring 
defined by 
\[(aC+bC')(x)=aC(x)+bC'(x)\]
for any arc $x$ of $T$. 

For an $R$-colored virtual tangle diagram $(T,C)$, 
the colors appearing at the $2m$ boundary points 
in cyclic order along $\partial D^{2}$ form a vector 
\[
v=(a_1,\dots,a_{2m})\in R^{2m}.
\]
We call $v$ the \emph{boundary color vector} of $(T,C)$ 
and write $v=\partial(T,C)$. 
The index $i$ of $a_i$ is taken modulo $2m$. 
We remark that the choice of the starting endpoint 
for the vector $v$ is arbitrary 
and does not affect the arguments in this paper. 
A vector $v$ is said to be \emph{trivial} 
if $a_1=\dots=a_{2m}$, 
and \emph{nontrivial} otherwise. 
We denote by 
$e_1=(1,\dots,1)\in R^{2m}$ the trivial vector, 
whose entries are all equal to $1$. 

\subsection{$\Delta(v)$, $d(v)$, $k(v)$, and $M(v)$} 

For $v=(a_1,\dots,a_{2m})\in R^{2m}$, 
we define the alternating sum 
\[\Delta(v)=\sum_{i=1}^{2m}(-1)^{i-1}a_i\in R.\]

When $R=\Z$ and $v$ is nontrivial, 
let $d(v)$ denote the greatest common divisor 
of the differences between the entries of $v$: 
\[d(v)=\gcd\{a_i-a_j\mid 1\leq i\ne j\leq 2m\}.\] 
Since $v$ is nontrivial, 
$d(v)$ is a positive integer. 
It can be uniquely written in the form 
\[d(v)=2^k(2n+1)\] 
with $k,n\geq 0$. 
We define $k(v)=k$. 
Furthermore, 
let $M(v)$ denote the multiset consisting of 
the congruence classes of $a_1,\dots,a_{2m}$ modulo $2d(v)$: 
\[M(v)=\{a_1,\dots,a_{2m} \ ({\rm mod}~2d(v))\}.\]
For example, the vector $v=(1,7,-5,-11)\in\Z^4$ satisfies 
\[\Delta(v)=0,\ d(v)=6,\ k(v)=1,\text{ and }
M(v)=\{1,1,7,7~(\mathrm{mod}~12)\}.\]

By definition, 
for a nontrivial vector $v=(a_{1},\dots,a_{2m})\in\Z^{2m}$,  
we may write \[a_i=a_1+d(v)b_i\]
for some $b_i\in\Z$ $(1\leq i\leq 2m)$. 
Note that $b_1=0$. 
Let $w=(b_1,\dots,b_{2m})\in\Z^{2m}$ 
be the vector associated with $v$.

\begin{lemma}\label{lem21}
In the notation above, the following hold. 
\begin{enumerate}
\item
If there exists a $\Z$-colored virtual tangle diagram $(T,C)$ 
such that $\partial(T,C)=w$, 
then 
\[v=\partial(T,a_1E_1+d(v)C).\]
\item 
If $\Delta(v)$ is divisible by $2^{k(v)+1}$, 
then $\Delta(w)$ is even. 
\end{enumerate}
\end{lemma}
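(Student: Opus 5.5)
Both parts are direct verifications; I will treat them separately.

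For (1), I will use that the set of $\Z$-colorings of $T$ is a $\Z$-module, as noted in the preliminaries. Since $E_1$ and $C$ are $\Z$-colorings of $T$, so is $a_1E_1+d(v)C$. Indeed, at each classical crossing both sides of $C(x)+C(z)=2C(y)$ are linear, and $E_1$ satisfies $1+1=2\cdot1$. Taking boundary colors commutes with linear combinations, because each boundary point lies on a unique arc and the boundary color is just the value on that arc. Hence
\[
\partial(T,a_1E_1+d(v)C)=a_1e_1+d(v)\,\partial(T,C)=a_1e_1+d(v)w .
\]
Its $i$-th entry is $a_1+d(v)b_i=a_i$, so the vector equals $v$.

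For (2), I will compute $\Delta$ linearly. Since $v=a_1e_1+d(v)w$ and $\Delta$ is linear, we get $\Delta(v)=a_1\Delta(e_1)+d(v)\Delta(w)$. Because $2m$ is even, $\Delta(e_1)=\sum_{i=1}^{2m}(-1)^{i-1}=0$. Therefore
\[
\Delta(v)=d(v)\Delta(w)=2^{k(v)}(2n+1)\Delta(w).
\]
If $2^{k(v)+1}$ divides this product, then cancelling $2^{k(v)}$ leaves $2$ dividing $(2n+1)\Delta(w)$. Since $2n+1$ is odd, $2$ divides $\Delta(w)$.

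Neither part has a real obstacle. The only point needing care is noting that the alternating sum of the constant vector vanishes because the length $2m$ is even.
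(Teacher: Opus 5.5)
Your proposal is correct and follows the paper's proof essentially verbatim. For (i), both you and the paper take the linear combination $a_1E_1+d(v)C$ and read off the boundary vector $a_1e_1+d(v)w=v$. For (ii), both write $\Delta(v)=d(v)\Delta(w)$, using that the alternating sum of a constant vector of even length vanishes, and then use that $d(v)/2^{k(v)}$ is odd.
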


\begin{proof}
(i) 
Since $T$ admits the $\Z$-colorings 
$E_1$ and $C$, 
it also admits the $\Z$-coloring $a_1E_{1}+d(v)C$. 
Under this coloring, 
each arc $x$ of $T$ receives the color $a_1+d(v)C(x)$. 
Consequently the boundary color vector is given by 
\[(a_1+d(v)b_1,\dots,a_1+d(v)b_{2m})=a_1e_1+d(v)w=v.\]

(ii) From $a_i=a_1+d(v)b_i$ we obtain 
\[\Delta(v)=
\sum_{i=1}^{2m}(-1)^{i-1}(a_1+d(v)b_i)\\
=d(v)\sum_{i=1}^{2m}(-1)^{i-1}b_i=d(v)\Delta(w).\]
Since 
$d(v)=2^{k(v)}(2n+1)$ for some $n\geq 0$, 
the assumption that 
$\Delta(v)$ is divisible by $2^{k(v)+1}$ 
implies that $\Delta(w)$ is even. 
\end{proof}

\subsection{Hurwitz action} 

In this subsection, we briefly recall 
the Hurwitz action of the braid group on $\mathbb{Z}^{2m}$.
Let $B_{2m}$ denote the braid group on $2m$ strands  
with the standard generators $\sigma_{1},\dots,\sigma_{2m-1}$. 
The set $\Z^{2m}$ admits the \emph{Hurwitz action} of $B_{2m}$ 
defined by 
\[(a_1,\dots,a_i,a_{i+1},\dots,a_{2m})\cdot\sigma_i=
(a_1,\dots,a_{i+1},2a_{i+1}-a_i,\dots,a_{2m})\]
for $1\leq i\leq 2m-1$. 
If $v\cdot \beta=w$ 
for $v,w\in\Z^{2m}$ and $\beta\in B_{2m}$, 
then there exists a $\Z$-colored braid diagram 
$(\beta,C)$ whose top and bottom endpoints receive  
the vectors $v$ and $w$, respectively. 

The orbits of $\Z^{2m}$ 
under the Hurwitz action of $B_{2m}$ 
are described by certain normal forms and 
the three invariants $\Delta(v)$, $d(v)$, and $M(v)$ 
as follows. 

\begin{lemma}[{\cite[Proposition 5.4]{NNSW}}]\label{lem22}
Let $m\geq 2$. 
For any nontrivial vector $v\in\Z^{2m}$, 
there exist integers $a,\lambda$ 
and a braid $\beta\in B_{2m}$ such that 
\[v\cdot \beta
=(a,\dots,a,\underset{\text{$i$-th}}{\underline{a+\lambda d(v)}},
a+d(v),\dots,a+d(v))\]
for some $1<i<2m$. \qed
\end{lemma}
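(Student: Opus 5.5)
The plan is to reduce to a normalized situation, shrink the spread of the entries with a Euclidean-type descent, and then sort the entries into the stated shape. Throughout, the main tool is the explicit formula for $\sigma_i^{\pm1}$ on two adjacent entries.

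\textbf{Normalization.} The Hurwitz action commutes with affine maps $x\mapsto a+dx$, since $2(a+dy)-(a+dx)=a+d(2y-x)$. Writing $v=a_1e_1+d(v)w$ as in Lemma~\ref{lem21}, it therefore suffices to treat $w$, which satisfies $d(w)=1$. So I will assume from now on that $d(v)=1$.

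\textbf{Local moves.} On an adjacent pair $(x,y)$ with $\delta=y-x$, we have
\[(x,y)\cdot\sigma_i=(y,y+\delta),\qquad (x,y)\cdot\sigma_i^{-1}=(x-\delta,x).\]
Hence $\sigma_i^{2k}$ sends $(x,x+\delta)$ to $(x+2k\delta,\,x+(2k+1)\delta)$. In words, an adjacent pair can be translated by any even multiple of its own difference, and $\sigma_i^{\pm1}$ realizes the reflection of one entry across its neighbour followed by a swap. Two consequences follow.
\begin{itemize}
\item Every entry changes only by even multiples of differences, so the multiset $M(v)$ modulo $2d$ is preserved.
\item Conjugating by such braids lets us bring an entry next to any other entry, at the cost of reflecting it across the entries it passes.
\end{itemize}

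\textbf{Descent.} Take as complexity the quantity $N(v)=\max_i a_i-\min_i a_i$, refined by the number of entries attaining the extremes. The aim is a braid that strictly decreases this complexity while some pair of entries differs by more than $1$ in absolute value.

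The mechanism is a Euclidean algorithm. Suppose $x<y<z$ occur as entries. Translating the pair $(x,y)$ by even multiples of $y-x$, or reflecting $z$ across $y$, replaces $z$ by an entry of smaller distance to the rest, unless all pairwise differences already divide one another in a controlled way. Because $\gcd$ of the differences is $1$, iterating this drives the set of values down until every entry lies in $\{a,a+1\}$ for some $a$, except possibly one stray entry. That stray entry is forced by the parity constraint from $M(v)$ and, I expect, by the invariance of $\Delta$ under the action. It takes the value $a+\lambda$.

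\textbf{Sorting.} The final step arranges the entries as $a,\dots,a,\,a+\lambda,\,a+1,\dots,a+1$. An adjacent pair $(a,a+1)$ goes to $(a+1,a+2)$ under $\sigma$, so naive swapping does not preserve the value set. Instead I will use $\sigma_i^{-1}$ on a pair $(a+1,a)$ to produce $(a+2,a+1)$, and then compensate with further moves. Concretely, I plan to prove a small lemma for length $3$ or $4$: a block such as $(a+1,a,a)$ can be transformed to $(a,a,a+1)$, or to a block of the same type with the stray entry shifted. Sorting then proceeds by bubble-sort using this lemma, which is where $m\ge2$ (at least four strands) enters.

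I expect the main obstacle to be proving that the descent never gets stuck with two or more stray values, and correctly bookkeeping the single value $a+\lambda$. The strategy is to show that any two strays can always be merged, using the freedom to translate pairs by even multiples of their differences.
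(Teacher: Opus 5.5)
The paper does not prove this lemma; it quotes it from Proposition~5.4 of [NNSW]. So your proposal has to stand on its own, and as it stands it has a genuine gap.

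Several pieces are sound: the affine normalization to $d(v)=1$, the formula $(x,x+\delta)\cdot\sigma_i^{k}=(x+k\delta,x+(k+1)\delta)$ (valid for every $k\in\Z$), the invariance of $M(v)$, and your block lemma. The block lemma is just the fact that an adjacent equal pair passes through any entry unchanged, $(x,y,y)\cdot\sigma_1\sigma_2=(y,y,x)$.

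What is missing is the existence argument, and the descent as formulated cannot work. Since $\Delta$ is invariant and $|\Delta(v)|\le\sum_j|a_{2j-1}-a_{2j}|\le m\,N(v)$, you have $N\ge|\Delta(v)|/m$ on the whole orbit. For example, $(0,0,10,1)$ is already in normal form, yet every vector in its orbit has $N\ge 5$. So no braid can keep decreasing $N$ ``while some pair of entries differs by more than $1$'', and your measure does not single out the one entry that is allowed to stay far away. The Euclidean paragraph also ignores that only adjacent entries interact, and ``unless all pairwise differences already divide one another in a controlled way'' is not a checkable claim. That the process ends with at most one stray is exactly the content of the lemma, and you leave it as an expectation. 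The invariants $\Delta$ and $M(v)$ cannot supply it: they only tell you what $\lambda$ must be once a normal form is reached, not that one is reached.

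The sorting step has a second, independent gap. Bubble-sorting with your block lemma only relocates equal adjacent pairs, whereas isolated alternating entries $a,a+1$ cannot be exchanged without changing values. Take $(0,1,0,1)$. All entries lie in $\{0,1\}$ and there is no stray, so your descent stops there. But $\Delta=-2$, whereas every vector $(a,\dots,a,b,a+1,\dots,a+1)$ with $b\in\{a,a+1\}$ has $\Delta\in\{0,-1\}$. The normal form is reached only by creating a far entry: $(0,1,0,1)\cdot\sigma_3=(0,1,1,2)$, then moving the pair $(1,1)$ gives $(0,2,1,1)$. When a stray is already present, this produces a second one, which sends you straight back to the unresolved merging problem.

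One way to close both gaps is to work only with adjacent triples. On $(x,y,z)$, $\sigma_1^k$ and $\sigma_2^k$ change the differences $(\delta_1,\delta_2)$ by $\delta_2\mapsto\delta_2-k\delta_1$ and $\delta_1\mapsto\delta_1+k\delta_2$. Euclid therefore turns any adjacent triple into $(y,y,y+g)$ or $(y,y+g,y+g)$, where $g$ is the gcd of its differences and its sign can be chosen.
\begin{itemize}
\item For $m=2$, combining this with pair moves gives
$(a_1,a_2,a_3,a_4)\to(a_1,y,y+g,y+g)\to(y+g,y+g,a_1,y)\to(y+g,z,z,z+1)\to(z,z,y+g,z+1)$.
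The third step uses $\gcd(g,y-a_1)=d(v)=1$.
\item For $m\ge 3$, first sweep the triple reduction over positions $(1,2,3),(2,3,4),\dots$ to reach $(y_1,\dots,y_{2m-3},c,c,c+1)$.
\item Next, pair off all but two of the first $2m-2$ entries by further triple reductions.
\item Then apply the case $m=2$ to the four unpaired entries, obtaining $(z,z,s,z+1)$.
\item Finally, use $(z,z+1)$ as a catalyst: slide it next to an equal pair and apply $(x,x,x,x+1)\cdot\sigma_3\sigma_2\sigma_1=(x+1,x+2,x+2,x+2)$. This moves every other equal pair to the value $z$ or $z+1$, after which you can sort.
\end{itemize}
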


When $\Delta(v)=0$, 
this normal form simplifies as follows. 

\begin{lemma}[{\cite[Proposition~5.5]{NNSW}}]\label{lem23}
A (possibly trivial) 
vector $v\in\Z^{2m}$ satisfies $\Delta(v)=0$ 
if and only if 
there exist integers $a,b$ and a braid $\beta\in B_{2m}$ 
such that 
\[v\cdot\beta= (\underbrace{a,\dots,a}_{2r}, 
\underbrace{b,\dots,b}_{2s})\]
for some $r,s\geq 0$ with $r+s=m$. 
\qed
\end{lemma}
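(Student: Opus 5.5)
The proof splits into the easy "if" direction, which rests on the invariance of $\Delta$ under the Hurwitz action, and the "only if" direction, which specializes the normal form of Lemma~\ref{lem22} using $\Delta(v)=0$.

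\emph{Invariance of $\Delta$.} Under $\sigma_i$ the pair $(a_i,a_{i+1})$ is replaced by $(a_{i+1},2a_{i+1}-a_i)$. Its contribution to $\Delta$ is $\pm(a_i-a_{i+1})$ before the move, and $\pm\bigl(a_{i+1}-(2a_{i+1}-a_i)\bigr)=\pm(a_i-a_{i+1})$ after, with the same sign. All other entries are unchanged, so $\Delta(v\cdot\sigma_i)=\Delta(v)$. Since the $\sigma_i$ generate $B_{2m}$ and $\Delta$ is also preserved under $\sigma_i^{-1}$, we get $\Delta(v\cdot\beta)=\Delta(v)$ for every braid $\beta$.

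\emph{The ``if'' direction.} A vector of the form $(a,\dots,a,b,\dots,b)$ with blocks of even lengths $2r$ and $2s$ has alternating sum $0$. By the invariance, $\Delta(v)=\Delta(v\cdot\beta)=0$.

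\emph{The ``only if'' direction.}
\begin{itemize}
\item If $v$ is trivial, take $\beta=1$ and $r=m$.
\item If $m=1$, then $\Delta(v)=a_1-a_2=0$ forces $v$ to be trivial, so we are in the previous case.
\item Otherwise $m\ge 2$ and $v$ is nontrivial. Lemma~\ref{lem22} gives a braid $\beta$ with
\[
v\cdot\beta=(a,\dots,a,a+\lambda d,a+d,\dots,a+d),
\]
where $d=d(v)$ and the entry $a+\lambda d$ sits in the $i$-th position.
\end{itemize}
By invariance this vector also has $\Delta=0$. Write it as $a e_1$ plus a correction. The alternating sum of $a e_1$ vanishes, so
\[
0=d\Bigl((-1)^{i-1}\lambda+\sum_{j=i+1}^{2m}(-1)^{j-1}\Bigr).
\]

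\emph{Solving for $\lambda$.}
\begin{itemize}
\item If $i$ is even, the tail sum has an even number $2m-i$ of terms and vanishes. Hence $-\lambda d=0$, so $\lambda=0$. The vector becomes $i$ copies of $a$ followed by $2m-i$ copies of $a+d$, which is the required form with $2r=i$.
\item If $i$ is odd, the tail sum has an odd number of terms beginning with $-1$, so it equals $-1$. Hence $d(\lambda-1)=0$, so $\lambda=1$. The vector becomes $i-1$ copies of $a$ followed by $2m-i+1$ copies of $a+d$, with $i-1$ even.
\end{itemize}
In both cases the result has the form $(a,\dots,a,b,\dots,b)$ with $b=a+d$ and blocks of even length, and $r+s=m$ is automatic.

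The only real input is the normal form of Lemma~\ref{lem22}. Once that is granted, the remaining step is the parity bookkeeping in the alternating tail sum. That is the one place where a sign slip could occur, so I would double-check it on a small case such as $m=2$.
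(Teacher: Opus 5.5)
Your proposal is correct and follows essentially the route the paper indicates. The paper gives no proof of its own: it cites Proposition~5.5 of [NNSW] and introduces the lemma by remarking that the normal form of Lemma~\ref{lem22} ``simplifies'' when $\Delta(v)=0$. That specialization is exactly what you carry out, using Hurwitz-invariance of $\Delta$ and the parity computation that forces $\lambda=0$ for even $i$ and $\lambda=1$ for odd $i$ (the cancellation of $d$ is justified since $d(v)>0$ for nontrivial $v$).
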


\begin{theorem}[{\cite[Theorem 1.1]{NNSW}}]\label{thm24}
For two nontrivial vectors $v,w\in\Z^{2m}$, 
there exists a braid $\beta\in B_{2m}$ 
such that $v\cdot \beta=w$ if and only if 
\[\Delta(v)=\Delta(w), \ d(v)=d(w), \text{ and }M(v)=M(w).\]
\qed
\end{theorem}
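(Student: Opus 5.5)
The natural route to Theorem~\ref{thm24} is to check first that $\Delta$, $d$ and $M$ are invariant under the Hurwitz action, and then, for the converse, to bring every nontrivial vector to a canonical normal form that depends only on these three invariants.

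\textbf{Invariance.} It suffices to treat one generator $\sigma_i$. It replaces the pair $(a_i,a_{i+1})$ by $(a_{i+1},2a_{i+1}-a_i)$ and leaves the other entries unchanged.
\begin{itemize}
\item For $\Delta$: the contribution of these two slots is $\pm(a_i-a_{i+1})$ before the move and $\pm(a_{i+1}-(2a_{i+1}-a_i))=\pm(a_i-a_{i+1})$ after it, so $\Delta$ is unchanged.
\item For $d$: every difference between new entries is an integer combination of old differences, since $2a_{i+1}-a_i-a_j=(a_{i+1}-a_i)+(a_{i+1}-a_j)$. The action is invertible, via $\sigma_i^{-1}$. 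Hence the subgroup of $\Z$ generated by all differences is preserved, and so is its positive generator $d(v)$.
\item For $M$: we have $2a_{i+1}-a_i=a_i+2(a_{i+1}-a_i)\equiv a_i \pmod{2d(v)}$. So the multiset of residues, $\{a_i,a_{i+1}\}$ replaced by $\{a_{i+1},a_i\}$ modulo $2d(v)$, is unchanged.
\end{itemize}
This proves the ``only if'' direction.

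\textbf{Converse.} Suppose $v$ and $w$ share the three invariants, and write $d=d(v)=d(w)$. By Lemma~\ref{lem22}, each lies in the orbit of a vector of the form $(a,\dots,a,a+\lambda d,a+d,\dots,a+d)$. It then suffices to show that two normal forms with equal invariants are Hurwitz equivalent.

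I will carry this out in three steps.
\begin{enumerate}
\item I normalize $a$. Acting on an adjacent pair $(x,x+d)$ by $\sigma$ gives $(x+d,x+2d)$, and by $\sigma^{-1}$ gives $(x-d,x)$. Combining such moves with moves that transport entries (an equal pair $(x,x)$ is fixed by $\sigma$), one can translate the whole vector by multiples of $2d$. The aim is to take $a$ in a fixed residue window modulo $2d$ determined by $M$.
\item The multiset $M$ then fixes, modulo $2d$, how many entries are $\equiv a$ and how many are $\equiv a+d$. This pins down the parity of $\lambda$ and the number of entries equal to $a$.
\item Finally, $\Delta$ of the normal form equals $\pm\lambda d$ plus a fixed quantity determined by the counts and the index $i$, so $\lambda$ is determined once the position $i$ is normalized.
\end{enumerate}

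\textbf{Main obstacle.} The hard part is step~1 together with the normalization of the index $i$. I must show that the distinguished entry $a+\lambda d$ can be moved to a standard slot, and that the blocks can be shifted by $2d$, all without altering the other entries. My first attempt would be to use explicit short braid words, such as $\sigma_j\sigma_{j+1}\sigma_j$ acting on triples $(x,y,y)$, and to verify by direct computation that they permute blocks and change $\lambda$ by $\pm2$ while compensating elsewhere. I expect this to need a case analysis according to whether the parities of the block lengths are even or odd. This is also where the hypothesis $m\ge2$ implicit in Lemma~\ref{lem22} is needed, since it leaves enough room for the moves.
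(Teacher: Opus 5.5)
\textbf{There is a genuine gap: the converse is not proved.} The paper gives no proof of this statement; it is quoted from [NNSW, Theorem~1.1]. So I judge your proposal on its own.

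Your ``only if'' half is correct and complete. You show that each $\sigma_i$ preserves $\Delta$, the subgroup of $\Z$ generated by the differences (hence $d$, using invertibility), and the residues modulo $2d$. That is exactly what is needed.

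The gap is the converse, which is the whole substance of the theorem. Your steps~1--3 are a programme, and the decisive ingredients are only announced. You give no braid word that translates a normal form by $2d$. You give none that moves the distinguished entry $a+\lambda d$ to a standard position. You give none that decides which of the two residue classes $a$, $a+d$ modulo $2d$ occupies the left block. That last ambiguity is real: $(0,0,1,1)$ and $(1,1,2,2)$ have the same invariants. Writing ``my first attempt would be $\sigma_j\sigma_{j+1}\sigma_j$ on triples $(x,y,y)$'' is not an argument; the case analysis you anticipate is precisely what has to be carried out.

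Step~2 is also false as stated, which shows this is not mere bookkeeping. Once the residue of $a$ is fixed, $M$ determines only the number of entries congruent to $a$ modulo $2d$. That number is $i$ or $i-1$ according as $\lambda$ is even or odd. It does not determine $i$ and the parity of $\lambda$ separately, and the normal form of Lemma~\ref{lem22} is not unique. For example, $(0,-2,1,1)$ (with $i=2$, $\lambda=-2$) and $(0,0,3,1)$ (with $i=3$, $\lambda=3$) are both normal forms with $\Delta=2$, $d=1$ and $M=\{0,0,1,1\ (\mathrm{mod}~2)\}$. A proof must therefore produce an explicit Hurwitz equivalence between normal forms whose distinguished entries sit at positions of different parity, with $\lambda$ of different parity. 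Your plan does not supply this.

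Finally, Lemma~\ref{lem22} needs $m\ge2$, while the theorem is stated for all $m$, so the case $m=1$ needs its own argument. It is easy. The $B_2$-orbit of $(x,x+c)$ is $\{(x+nc,\,x+(n+1)c)\mid n\in\Z\}$. Equal $\Delta$ and $M$ force $x'\equiv x$ or $x'\equiv x+c\pmod{2|c|}$, hence $x'=x+nc$ for some $n\in\Z$.
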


\section{Classical tangle diagrams}\label{sec3}

In this section, 
all tangle diagrams are assumed to be classical. 
Let $R=\Z$ or $\Z/p\Z$, where $p\geq 3$ is odd. 

\begin{lemma}[cf. {\cite[Lemma~1.5]{Prz}}]\label{lem31}
For any $R$-colored tangle diagram $(T,C)$, 
the boundary color vector $v=\partial(T,C)$ satisfies $\Delta(v)=0$. 
\end{lemma}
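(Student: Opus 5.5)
We prove that $\Delta(\partial(T,C))=0$ for every $R$-colored classical tangle diagram by a checkerboard argument. The diagram is classical, so $T$ together with $\partial D^2$ divides $D^2$ into regions. We checkerboard-color these regions black and white so that regions sharing an edge of $T$ have opposite colors. This is possible because every crossing is 4-valent and the only other vertices are the $2m$ boundary points. Concretely, we may color a region black exactly when a generic path from a fixed basepoint on $\partial D^2$ to the region crosses $T$ an odd number of times. Then the $2m$ boundary arcs of $\partial D^2$ between consecutive endpoints alternate between black and white.

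The key step is an invariant that, for a suitable choice of orientations, equals the alternating sum. Orient each string of $T$ arbitrarily. For each edge $e$ of the diagram (a piece of an arc between consecutive crossings or endpoints), define $\varepsilon(e)=+1$ if the black region lies to the left of $e$ and $-1$ otherwise. Consider
\[
S=\sum_{\text{arcs } x}C(x)\,\bigl(\varepsilon(\text{end edge of }x)-\varepsilon(\text{start edge of }x)\bigr).
\]
We compute $S$ in two ways.

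First, regroup $S$ crossing by crossing. At an under-crossing, the two under-edges lie on opposite sides of the over-strand. Passing from one side to the other flips which side is black, so the incoming under-arc $x$ and the outgoing under-arc $z$ contribute $C(x)\varepsilon$ and $-C(z)(-\varepsilon)$. Along the over-arc $y$, the side coloring also flips across the crossing. One then checks that the contribution at the crossing is $\pm\bigl(C(x)+C(z)-2C(y)\bigr)$, which vanishes by the coloring condition. If we instead define $S$ as a sum over edge-endpoints of $\pm C\cdot\varepsilon$, the sum splits into crossing contributions and boundary contributions. Each crossing contribution is $\pm\bigl(C(x)+C(z)-2C(y)\bigr)=0$.

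Second, the boundary contributions are $\pm a_i$ with signs determined by the boundary-region colors. Because the boundary regions alternate in color, these signs alternate with $i$, up to a global sign. Hence $S=\pm\Delta(v)$, and combined with the first computation this gives $\Delta(v)=0$.

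The main obstacle is the sign bookkeeping at a crossing: we must verify that the over-arc really appears with coefficient $2$ and the same sign as the under-arcs. A cleaner way to organize this is to note that $\Delta$ is unchanged by Reidemeister moves and planar isotopy. Alternatively, we can induct on crossings by cutting the diagram along a vertical line and using a "local" alternating sum across each horizontal slice. At a crossing, the slice colors $(a,b)$ change to $(b,2b-a)$, and the alternating sum is preserved since $a-b=b-(2b-a)$. Caps and cups add or remove pairs $(c,c)$ in adjacent positions, which contribute $0$ to the alternating sum.

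Slicing by a generic height function is probably the most robust route, and we will use it. Put $T$ in Morse position so that the boundary points lie on the top and bottom, or more generally arrange $D^2$ so that the endpoints are ordered. Sweep a horizontal line across the diagram and track the alternating sum of the colors it meets. This sum is invariant under passing crossings, maxima, and minima. At the start the line meets nothing, so the sum is $0$. At the end, the cyclic boundary order yields $\Delta(v)$ up to sign, since the boundary positions interleave consistently. Hence $\Delta(v)=0$.
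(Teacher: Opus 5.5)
Your final argument, the sweep by a generic height function, is correct and takes a genuinely different route from the paper.

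The paper's proof goes like this:
\begin{itemize}
\item It adds a large circle passing under all $2m$ strands.
\item It uses Reidemeister invariance of colorings to know that $C$ extends to this circle.
\item It walks around the circle: the relations $b_{i-1}+b_i=2a_i$ force $b_{2m}=b_0-2\Delta(v)$, so $2\Delta(v)=0$.
\item It concludes because $2$ is not a zero-divisor in $R$; this is where the oddness of $p$ is used.
\end{itemize}

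You instead track the alternating sum of colors along a horizontal slice. It is unchanged at a crossing, where $(a,b)$ becomes $(b,2b-a)$ or $(2a-b,a)$ according to which strand is over. You should state both cases; each preserves $a-b$. It is also unchanged at a maximum or minimum, where an adjacent pair $(c,c)$ appears or disappears without changing the parity of later positions.

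What your route buys:
\begin{itemize}
\item It is purely local and needs no Reidemeister moves.
\item It yields $\Delta(v)=0$ without ever dividing by $2$. So it proves the lemma for Fox colorings in any abelian group (for instance $\Z/2^k\Z$), which the paper's argument does not.
\item It is the same invariance of $\Delta$ that appears as an orbit invariant of the Hurwitz action in Theorem~\ref{thm24}.
\item It shows exactly where classicality is needed: a virtual crossing sends $(a,b)$ to $(b,a)$ and reverses the sign.
\end{itemize}
What it costs is a Morse-position setup, which the paper's circle trick avoids.

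That setup must be stated consistently. Place all $2m$ endpoints on the top edge of a rectangle; this is possible by a homeomorphism of $D^2$ preserving their cyclic order. Then a slice below the diagram is empty, and the top slice reads a rotation or reversal of $v$, whose alternating sum is $\pm\Delta(v)$ because $2m$ is even. As written, you say the endpoints lie on the top and bottom and then that the initial slice meets nothing, which is contradictory. With endpoints on both edges the argument still works, but you must compare the two readings with the correct signs.

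Also discard the checkerboard paragraph. With your definition of $\varepsilon$, one always has $\varepsilon(e_x)=-\varepsilon_{\mathrm{in}}(y)$ at a crossing. So the crossing contribution is $\pm\bigl(C(x)+C(z)+2C(y)\bigr)$, not $\pm\bigl(C(x)+C(z)-2C(y)\bigr)$. In any case you give no independent reason why $S=0$, so that computation proves nothing.
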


\begin{proof}
The proof follows by the argument of Przytycki used in~\cite{Prz}. 
Let $T'$ be the diagram obtained from $T$ 
by adding a large circle below $T$, 
as shown on the left of Figure~\ref{fig:Delta0}. 
The $R$-coloring $C$ of $T$ then 
extends naturally to an $R$-coloring of $T'$, 
since $T'$ is equivalent to a disjoint union of $T$ and a circle 
by a finite sequence of Reidemeister moves. 
See the right-hand side of the figure. 

\begin{figure}[htb]
  \vspace{1em}
  \hspace{1em}
  \centering
    \begin{overpic}[]{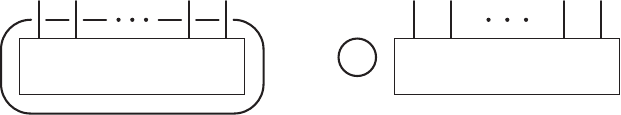}
      \put(-25,20){$T'=$}
      \put(60.5,20){$T$}
      \put(13.5,61){$a_{1}$}
      \put(32,61){$a_{2}$}
      \put(71.3,61){$a_{2m-1}$}
      \put(104,61){$a_{2m}$}
      \put(3,50){$b_{0}$}
      \put(23,50){$b_{1}$}
      \put(41.5,50){$b_{2}$}
      \put(114,50){$b_{2m}$}
      \put(241,20){$T$}
      \put(195,61){$a_{1}$}
      \put(212,61){$a_{2}$}
      \put(255,61){$a_{2m-1}$}
      \put(285,61){$a_{2m}$}
      \put(168,42){$b_{0}$}
    \end{overpic}
  \caption{Adding a circle to $T$}
  \label{fig:Delta0}
\end{figure}

Let $v=(a_1,\dots,a_{2m})$ be the boundary color. 
For $0\leq i\leq 2m$, 
let $b_i$ denote the color 
of the arc of the outer circle between 
the strands whose colors are $a_i$ and $a_{i+1}$, 
where the indices are taken modulo $2m$. 
Since $b_{i-1}+b_i=2a_i$, 
we obtain 
\[b_i=(-1)^i\biggl(b_0+2\sum_{j=1}^i(-1)^ja_j\biggr).\]
In particular, when $i=2m$ 
we have $b_{2m}=b_0-2\Delta(v)$.  
Since $b_{2m}=b_0$, 
it follows that $2\Delta(v)=0\in R$. 
As $R=\Z$ or $\Z/p\Z$ with $p\geq 3$ odd, 
the element $2$ is not a zero-divisor in $R$. 
Hence $\Delta(v)=0$. 
\end{proof}

\begin{proof}[Proof of {\rm Proposition~\ref{prop11}}.]
\underline{(i)$\Rightarrow$(ii).} 
This follows immediately from Lemma~\ref{lem31}. 

\underline{(ii)$\Rightarrow$(i).} 
Let $v=(a_{1},\dots,a_{2m})\in R^{2m}$ with $\Delta(v)=0$. 

We first consider the case $R=\Z$. 
By Lemma~\ref{lem23}, 
there exists a $\Z$-colored braid diagram $(\beta,C')$ such that 
the top endpoints receive the vector $v$ and 
the bottom endpoints receive 
\[w=(\underbrace{a,\dots,a}_{2r},\underbrace{b,\dots,b}_{2s})\] 
for some $r,s\geq 0$ with $r+s=m$. 

To construct a tangle diagram $T$, 
connect the $(2i-1)$-st and $2i$-th bottom endpoints of $\beta$ 
by simple arcs $(1\leq i\leq m)$. 
Each such arc joins endpoints with the same color. 
This produces a $\Z$-colored classical tangle diagram
$(T,C)$ satisfying $\partial(T,C)=v$. 
See Figure~\ref{fig:connect}. 
It is straightforward to verify that 
this diagram $T$ satisfies the property $\mathrm{P}_1$. 
Indeed, since $\beta$ is a braid diagram, 
$T$ can be reduced to a diagram with no crossings 
by repeated applications of the move shown in Figure~\ref{fig:interchange}(a). 

\begin{figure}[htb]
  \vspace{1em}
  \centering
    \begin{overpic}[]{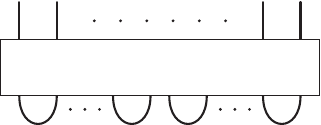}
      \put(63,24){$(\beta,C')$}
      \put(-43,24){$(T,C)=$}
      \put(4.5,65){$a_{1}$}
      \put(22.5,65){$a_{2}$}
      \put(108,65){$a_{2m-1}$}
      \put(139.5,65){$a_{2m}$}
      \put(15,-9){$a$}
      \put(60.5,-9){$a$}
      \put(88,-9){$b$}
      \put(133.5,-9){$b$}
    \end{overpic}
  \vspace{1em}
  \caption{Closing $(\beta,C')$ with $m$ arcs}
  \label{fig:connect}
\end{figure}

We next consider the case $R=\Z/p\Z$. 
Choose integers $b_i\in\Z$ ($1\leq i\leq 2m-1$)
such that $b_i=a_i\in\Z/p\Z$. 
Define 
\[b_{2m}=-\sum_{i=1}^{2m-1}(-1)^{i-1}b_i\]
and set $w=(b_1,\dots,b_{2m})\in\Z^{2m}$. 
Then $\Delta(w)=0\in\Z$. 
By the result for $R=\Z$ established above, 
there exists a $\Z$-colored classical tangle diagram $(T,C)$ 
such that $\partial(T,C)=w$. 
Since $\Delta(v)=0\in\Z/p\Z$, 
we have $b_{2m}=a_{2m}\in\Z/p\Z$. 
Let $\pi:\Z\rightarrow\Z/p\Z$ denote the natural projection. 
Then $(T,\pi\circ C)$ is a $\Z/p\Z$-colored tangle diagram 
with $\partial(T,\pi\circ C)=v$. 
Moreover, $T$ satisfies the property~$\mathrm{P}_1$. 
\end{proof}


\section{Virtual tangle diagrams}\label{sec4}

Throughout this section, unless stated otherwise, 
tangle diagrams are allowed to have virtual crossings.

\begin{lemma}\label{lem41}
Let $v=(a_1,\dots,a_{2m})\in\Z^{2m}$ be the boundary color vector 
of a $\Z$-colored virtual tangle diagram $(T,C)$. 
If $v$ is nontrivial, then the following hold. 
\begin{enumerate}
\item 
For any arc $x$ of $T$, we have 
\[C(x)\equiv a_1\pmod{2^{k(v)}}.\]
\item
For any two arcs $x$ and $y$ lying on the same string of $T$,  
we have 
\[C(x)\equiv C(y)\pmod{2^{k(v)+1}}.\]
\end{enumerate}
\end{lemma}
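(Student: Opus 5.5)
The plan is to normalize the coloring and then follow each string arc by arc through its under-crossings. By linearity, $C'=C-a_1E_1$ is again a $\Z$-coloring of $T$. Its boundary color vector is $v-a_1e_1$, whose entries $a_i-a_1$ are all divisible by $d(v)$, and hence by $2^{k(v)}$. Both statements are unchanged if we replace $C$ by $C'$: statement (i) becomes ``every arc has color divisible by $2^{k(v)}$'', and statement (ii) is invariant under adding a constant. So we may assume $a_1=0$ and that every boundary color is divisible by $2^{k}$, where $k=k(v)$.

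The key local observation is this. Let $x$ and $z$ be the two under-arcs at a classical crossing with over-arc $y$. Then $z=2y-x$. At a virtual crossing the arc simply continues. Since $T$ has no closed components, every arc of $T$ lies on some string, and consecutive arcs along that string are related by $z=2y-x$. In particular, moving along a string from one arc to the next changes the color by $2(y-x)$.

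For (i), suppose not all colors are divisible by $2^k$. Let $j<k$ be the minimum $2$-adic valuation of the nonzero colors of arcs of $T$, and pick an arc $x$ whose color has valuation exactly $j$. At any under-crossing along the string of $x$, every color is divisible by $2^j$, so $2y\equiv 0\pmod{2^{j+1}}$. Hence $z\equiv -x\pmod{2^{j+1}}$. Consequently, having valuation exactly $j$ (equivalently, being $\equiv 2^j\pmod{2^{j+1}}$) is preserved from arc to arc along the string. Following the string of $x$ to one of its endpoints on $\partial D^2$ then produces a boundary color of valuation $j<k$. This contradicts the divisibility of every boundary color by $2^k$, which proves (i).

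For (ii), we now know that all colors are divisible by $2^k$. At each crossing, $2y\equiv 0\pmod{2^{k+1}}$ and $2x\equiv 0\pmod{2^{k+1}}$, so $z=2y-x\equiv -x\equiv x\pmod{2^{k+1}}$. By induction along the string, any two arcs on the same string are congruent modulo $2^{k+1}$.

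The only delicate point is the minimal-valuation argument in (i). It relies on taking $j$ to be the global minimum over all arcs, so that the over-arc $y$ is also divisible by $2^j$, and on the fact that every string ends on the boundary.
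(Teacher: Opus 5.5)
Your proof is correct and is essentially the paper's argument. The paper proves (i)$_s$ and (ii)$_s$ together by induction on $0\le s\le k(v)$, using $C(x)-C(z)=2(C(y)-C(z))$ to pass from ``all colors congruent modulo $2^s$'' to ``colors constant modulo $2^{s+1}$ along each string,'' and then lifting via the boundary colors; your minimal-valuation argument (with the cosmetic normalization $a_1=0$) is exactly this induction in well-ordering form, with the same local step and the same use of the fact that every string ends on $\partial D^2$.
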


\begin{proof}
We prove by induction on $s$ that  
for each $0\leq s\leq k(v)$, 
the following hold: 
\begin{itemize}
\item[(i)$_s$] 
For any arc $x$ of $T$, we have 
\[C(x)\equiv a_1\pmod{2^s}.\]
\item[(ii)$_s$]
For any two arcs $x$ and $y$ lying on the same string of $T$,  
we have 
\[C(x)\equiv C(y)\pmod{2^{s+1}}.\] 
\end{itemize}

For $s=0$, (i)$_0$ is trivial 
as $C(x)\equiv a_1\pmod{1}$. 
To prove (ii)$_0$, consider any classical crossing 
at which $x,z$ are the two under-arcs and 
$y$ is the over-arc.
Since $C(x)+C(z)=2C(y)$,
we have \[C(x)-C(z)=2(C(y)-C(z)),\] 
and hence $C(x)\equiv C(z)\pmod{2}$. 
Since this holds at every classical crossing 
and virtual crossings do not affect colors, 
all arcs on the same string have the same parity. 
Thus (ii)$_0$ follows.

Assume that (i)$_s$ and (ii)$_s$ hold 
for some $0\leq s\leq k(v)-1$. 
To prove (i)$_{s+1}$, 
let $x$ be any arc of $T$. 
By (ii)$_s$, the color $C(x)$ is congruent 
modulo $2^{s+1}$ to the color $a_i$ 
at an endpoint of the same string. 
Since $s+1\leq k(v)$ and $a_i\equiv a_1\pmod{2^{k(v)}}$, 
we have $a_i\equiv a_1\pmod{2^{s+1}}$. 
Hence $C(x)\equiv a_1\pmod{2^{s+1}}$, 
proving (i)$_{s+1}$.

To prove (ii)$_{s+1}$, 
consider any classical crossing with arcs $x,y,z$ as above.  
From $C(x)-C(z)=2(C(y)-C(z))$ and 
$C(y)\equiv C(z)\pmod{2^{s+1}}$ by (i)$_{s+1}$, 
we obtain $C(x)\equiv C(z)\pmod{2^{s+2}}$. 
Therefore, the colors of any two arcs 
lying on the same string 
are congruent modulo $2^{s+2}$. 
Thus (ii)$_{s+1}$ follows. 

By induction, (i)$_{s}$ and (ii)$_{s}$ hold for all $0\leq s\leq k(v)$. 
Taking $s=k(v)$ gives the desired conclusions. 
\end{proof}

\begin{lemma}\label{lem42}
Let $v=(a_1,\dots,a_{2m})\in\Z^{2m}$ be the boundary color vector 
of a $\Z$-colored virtual tangle diagram $(T,C)$. 
If $v$ is nontrivial, 
then $\Delta(v)$ is divisible by $2^{k(v)+1}$. 
\end{lemma}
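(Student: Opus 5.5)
We will deduce the statement from Lemma~\ref{lem41}(ii) by pairing up the boundary points according to the strings of $T$. Each of the $m$ strings has two endpoints on $\partial D^2$, so the $2m$ boundary points are partitioned into $m$ pairs $\{i,j\}$, one pair for each string. The two boundary colors $a_i,a_j$ of a string are the colors of arcs lying on that string. Hence Lemma~\ref{lem41}(ii) gives
\[a_i\equiv a_j \pmod{2^{k(v)+1}}.\]

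Next we rewrite $\Delta(v)$ string by string. Grouping the alternating sum by strings, we have
\[\Delta(v)=\sum_{\{i,j\}}\bigl((-1)^{i-1}a_i+(-1)^{j-1}a_j\bigr).\]
For a string with $i\not\equiv j\pmod 2$, the contribution is $\pm(a_i-a_j)$, which is divisible by $2^{k(v)+1}$. For a string with $i\equiv j\pmod 2$, the contribution is $\pm(a_i+a_j)$. Write $a_i+a_j=2a_i-(a_i-a_j)$. By Lemma~\ref{lem41}(i), $a_i\equiv a_1\pmod{2^{k(v)}}$, so this contribution is congruent to $\pm 2a_1$ modulo $2^{k(v)+1}$.

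The remaining step is to control these ``same-parity'' strings. In the classical case they cannot occur, but virtual crossings allow them, so we count them with signs. Let $P$ be the number of same-parity strings with both endpoints at odd positions, and $Q$ the number with both endpoints at even positions. There are $m$ odd positions and $m$ even positions. Each opposite-parity string uses one odd and one even position. Counting odd positions gives $2P+(m-P-Q)=m$, and similarly for even positions, so $P=Q$. The odd-odd strings contribute $+2a_1$ and the even-even strings contribute $-2a_1$, modulo $2^{k(v)+1}$. These cancel, and therefore $\Delta(v)\equiv 0\pmod{2^{k(v)+1}}$.

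The main care point is the counting identity $P=Q$. It is elementary, but it is exactly what replaces the planarity argument of Lemma~\ref{lem31}. Everything else is a direct application of Lemma~\ref{lem41}.
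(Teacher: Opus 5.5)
Your proof is correct and uses the same key input as the paper: Lemma~\ref{lem41}(ii), applied to the two endpoints of each string. The paper handles the signs more economically, by writing $a_i=a_1+2^{k(v)}b_i$ so that $\Delta(v)=2^{k(v)}\Delta(w)$ (the $a_1$ terms cancel); modulo $2$ the signs disappear, so the pairing gives $\Delta(w)\equiv\sum_i b_i\equiv 0\pmod 2$ directly, and your count $P=Q$ is absorbed into that cancellation.
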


\begin{proof}
Write $a_i=a_1+2^{k(v)}b_i$ $(1\leq i\leq 2m)$ 
and set $w=(b_1,\dots,b_{2m})\in\Z^{2m}$. 

Suppose that $a_{i}$ and $a_{j}$ are the colors at 
the endpoints of the same string of $T$. 
By Lemma~\ref{lem41}(ii), we have 
$a_{i}\equiv a_{j}\pmod{2^{k(v)+1}}$, 
which implies that 
$b_i\equiv b_j\pmod{2}$. 
Since the integers $a_{1},\dots,a_{2m}$ 
are partitioned into  $m$ pairs by the $m$ strings of $T$, 
we obtain 
\[\Delta(w)\equiv\sum_{i=1}^{2m}b_i\equiv 0\pmod{2}.\]
Since $\Delta(v)=2^{k(v)}\Delta(w)$, 
it follows that $\Delta(v)$ is divisible by $2^{k(v)+1}$.
\end{proof}

The assumption in Lemma~\ref{lem42} 
can hold only when $m\geq 2$. 
Indeed, when $m=1$, 
we obtain the following. 

\begin{proposition}\label{prop43}
For any $\Z$-colored $1$-string virtual tangle diagram $(T,C)$, 
all arcs of $T$ have the same color. 
In particular, 
the boundary color vector 
$\partial(T,C)\in\Z^2$ is trivial. 
\end{proposition}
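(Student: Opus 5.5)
Suppose, for contradiction, that some $\Z$-colored $1$-string diagram $(T,C)$ has two arcs with different colors. The idea is to run the $2$-adic argument of Lemma~\ref{lem41} with the gcd of all arc colors in place of $d(v)$, and use the single string to reach a contradiction.

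First normalize. Let $c$ be the color of the arc containing the first endpoint. Consider $C-cE_1$ and divide by
\[
d=\gcd\{C(x)-C(y)\mid x,y \text{ arcs of } T\}>0.
\]
This gives a $\Z$-coloring $C'$ of $T$, since colorings form a $\Z$-module and the crossing relation is homogeneous linear. The gcd of the differences of the colors of $C'$ is $1$, and the arc at the first endpoint has color $0$.

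Next, repeat the parity step (ii)$_0$ from the proof of Lemma~\ref{lem41}. At each classical crossing $C'(x)-C'(z)=2(C'(y)-C'(z))$, so the two under-arcs have the same parity. Since $T$ has only one string, every arc lies on it, and all colors of $C'$ are congruent modulo $2$ to the color $0$ of the first arc. So all colors of $C'$ are even, and every difference of colors is even. This contradicts the fact that the gcd of the differences is $1$.

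Hence all arcs of $T$ have the same color under $C$, and in particular $\partial(T,C)=(a_1,a_2)$ has $a_1=a_2$. The only point to check is the key step: with one string, "same string" means "all arcs", which is what lets the parity argument conclude. Equivalently, one can argue by infinite descent. If all arcs were congruent modulo $2^s$ to some value, then under-arcs at each crossing would agree modulo $2^{s+1}$. Connectivity of the single string then propagates this to all arcs, so all colors agree modulo every power of $2$ and are therefore equal. I expect no real obstacle beyond this bookkeeping.
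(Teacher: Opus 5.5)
Your proof is correct and uses the same key idea as the paper: the relation $C(x)-C(z)=2(C(y)-C(z))$ makes the two under-arcs agree $2$-adically, and since the diagram has a single string and no loop components, this propagates to every arc. The paper argues by induction that $C(x)\equiv a_1\pmod{2^s}$ for all $s$, which is exactly your closing descent variant; your main argument instead first normalizes to $(C-cE_1)/d$, in the spirit of Lemma~\ref{lem21}, so that a single parity step already contradicts $\gcd=1$.
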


\begin{proof}
Let $\partial(T,C)=(a_{1},a_{2})\in\Z^{2}$. 
 As in the proof of Lemma~\ref{lem41}, 
we show by induction on $s$ that, 
for each $s\geq0$, 
any arc $x$ of $T$ satisfies 
\[C(x)\equiv a_1\pmod{2^s}.\]
Since $C(x)-a_1$ is divisible by $2^s$ for all $s\geq 0$, 
we have $C(x)=a_{1}$. 
Thus all arcs of $T$ have the same color $a_{1}$. 
\end{proof}

\begin{lemma}\label{lem44}
Suppose that two vectors 
$v,w\in\Z^{2m}$ satisfy 
$v\cdot\beta=w$ for some $\beta\in B_{2m}$. 
If there exists a $\Z$-colored 
virtual tangle diagram $(T,C)$ 
such that $\partial(T,C)=w$ and $T$
satisfies the properties $\mathrm{P}_2$ and $\mathrm{P}_3$, 
then the same holds for $v$. 
\end{lemma}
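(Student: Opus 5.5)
The idea is to stack the $\Z$-colored braid diagram realizing $v\cdot\beta=w$ on top of the given tangle $(T,C)$, exactly as in the proof of Proposition~\ref{prop11}. By the remark after the definition of the Hurwitz action, there is a $\Z$-colored braid diagram $(\beta,C')$ whose top endpoints receive $v$ and whose bottom endpoints receive $w$. We place $(\beta,C')$ above $(T,C)$ so that the bottom endpoints of $\beta$ are glued to the boundary points of $T$ in the order $w=(b_1,\dots,b_{2m})$. To do this, we first choose the starting point of the cyclic order on $\partial D^2$ so that the $2m$ boundary points of $T$ appear consecutively along an arc of $\partial D^2$; the paper remarks that this choice is harmless. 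Since the colors agree at the glued points, $C'$ and $C$ together give a $\Z$-coloring $C''$ of the combined diagram $T''$, and $\partial(T'',C'')=v$.

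Next we check the two properties. For $\mathrm{P}_2$: a braid diagram has only classical crossings, so $T''$ has exactly as many virtual crossings as $T$, which is at most one. For $\mathrm{P}_3$: we undo the crossings of $\beta$ one at a time, starting from the top. Each crossing $\sigma_i^{\pm1}$ of the braid lies next to the boundary, and removing it is precisely a move of type (a) in Figure~\ref{fig:interchange}, interchanging two adjacent endpoints along $\partial D^2$. After $\beta$ is removed entirely, the diagram is isotopic to $T$. We then apply the sequence of moves (a) and (b) that reduces $T$ to a trivial tangle diagram, which exists because $T$ satisfies $\mathrm{P}_3$.

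The step needing most care is the compatibility of conventions. We must make sure that the braid generators together with the chosen ordering of endpoints match the Hurwitz formula, and that peeling off a single braid crossing really is the move in Figure~\ref{fig:interchange}(a) and not its mirror. If the move is only defined up to a sign, we handle this by noting that either sign of the move interchanges the endpoints. We also need the moves for $T$ to remain valid inside the larger disk; this holds because they are local to the boundary of the disk containing $T$, which is the boundary at the bottom of $\beta$ once $\beta$ has been undone.
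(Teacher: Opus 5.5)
Your proposal is correct and matches the paper's proof: you stack the $\Z$-colored braid $(\beta,C')$ on $(T,C)$ to get $\partial=v$. Property $\mathrm{P}_2$ holds because $\beta$ contributes no virtual crossings, and property $\mathrm{P}_3$ holds because the braid crossings are peeled off by moves of type (a), after which the moves already available for $T$ finish the reduction.
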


\begin{proof}
Let $(\beta,C')$ be the $\Z$-colored braid diagram 
associated with the Hurwitz action $v\cdot \beta=w$. 
Consider the $\Z$-colored tangle diagram $(T',C'')$ 
obtained by connecting $(\beta,C')$ to $(T,C)$, 
as shown in Figure~\ref{fig:Hurwitz-equiv}. 
Then, by construction, we have $\partial(T',C'')=v$. 

Since $T$ has at most one virtual crossing 
and $\beta$ has no virtual crossings, 
$T'$ satisfies the property $\mathrm{P}_2$. 
Moreover, since $T'$ is transformed into $T$ 
by a finite sequence of moves shown in Figure~\ref{fig:interchange}(a), 
$T'$ also satisfies the property $\mathrm{P}_3$. 
\end{proof}

\begin{figure}[htb]
  \centering
    \begin{overpic}[]{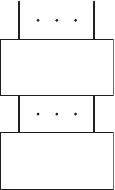}
      \put(60,80){$\leftarrow v$}
      \put(60,34){$\leftarrow w$}
      \put(14,57){$(\beta,C')$}
      \put(-51,40){$(T',C'')=$}
      \put(14,11){$(T,C)$}
    \end{overpic}
  \caption{Connecting $(\beta,C')$ to $(T,C)$}
  \label{fig:Hurwitz-equiv}
\end{figure}

\begin{proof}[Proof of {\rm Theorem~\ref{thm12}}]
\underline{(i)$\Rightarrow$(ii).} 
This follows from Lemma~\ref{lem42}. 

\underline{(ii)$\Rightarrow$(i).}
(a) Any trivial vector $v=ae_{1}=(a,\dots,a)\in\Z^{2m}$ 
is realized by a $\Z$-colored 
classical tangle diagram $(T,aE_{1})$. 

(b) 
Since $v$ is nontrivial, 
Proposition~\ref{prop43} implies that $m\geq 2$. 
By Lemmas~\ref{lem22} and \ref{lem44}, 
we may assume that 
\[v=(a,\dots,a,\underset{\text{$i$-th}}{\underline{a+\lambda d(v)}},
a+d(v),\dots,a+d(v))\in\Z^{2m}\]
for some $a,\lambda\in\Z$ and $1<i<2m$. 
Furthermore, by Lemma~\ref{lem21}, 
the proof reduces to the case  
\[v=(0,\dots,0,\underset{\text{$i$-th}}{\underline{\lambda}},1,\dots,1)\]
under the assumption that $\Delta(v)$ is even. 

\underline{Case 1: $i$ is even.} 
Since $\Delta(v)=-\lambda$ is even, 
we may write $\lambda=2n$ for some $n\in\Z$. 
Consider the vectors $v',v''\in\Z^{2m}$ defined by 
\begin{align*}
v'&=(0,\dots,0,\underset{\text{$i$-th}}{\underline{n}},1-n,1,\dots,1) \text{ and}\\
v''&=(0,\dots,0,\underset{\text{$i$-th}}{\underline{n}},1,1-n,1,\dots,1). 
\end{align*}
By Theorem~\ref{thm24}, there exists $\beta\in B_{2m}$ 
such that $v\cdot \beta=v'$. 
Indeed, a direct computation shows that 
\begin{align*}
&\Delta(v)=\Delta(v')=-2n,\ 
d(v)=d(v')=1, \text{ and }\\
&M(v)=M(v')
=\{\underbrace{0,\dots,0}_{i},
\underbrace{1,\dots,1}_{2m-i} \ ({\rm mod}~2)\}.
\end{align*}
Let $(\beta,C')$ be the $\Z$-colored braid diagram 
associated with this Hurwitz action. 

On the other hand, since $\Delta(v'')=0$, 
Proposition~\ref{prop11} implies that  
there exists a $\Z$-colored classical tangle diagram $(T',C'')$ 
such that $\partial(T',C'')=v''$ and 
$T'$ satisfies the property $\mathrm{P}_1$. 

Connect $(\beta,C')$ and $(T',C'')$ by inserting a single virtual crossing 
between the arcs whose colors are $1-n$ and $1$, 
as shown in Figure~\ref{fig:construction}. 
This yields the desired $\Z$-colored virtual tangle diagram $(T,C)$ 
with $\partial(T,C)=v$, and  
$T$ satisfies $\mathrm{P}_2$ and $\mathrm{P}_3$.

\begin{figure}[htb]
  \vspace{1em}
  \centering
    \begin{overpic}[]{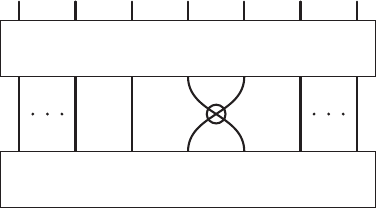}
      \put(6.7,104){$0$}
      \put(33.8,104){$0$}
      \put(50,104){$\lambda=2n$}
      \put(88,104){$1$}
      \put(115,104){$1$}
      \put(142,104){$1$}
      \put(169.3,104){$1$}
      \put(185,104){$\leftarrow v$}
      \put(1,53){$0$}
      \put(28,53){$0$}
      \put(54,53){$n$}
      \put(67,53){$1-n$}
      \put(119.5,53){$1$}
      \put(149,53){$1$}
      \put(176,53){$1$}
      \put(185,53){$\leftarrow v'$}
      \put(82,31){$1$}
      \put(119,31){$1-n$}
      \put(185,31){$\leftarrow v''$}
      \put(76,74.5){$(\beta,C')$}
      \put(-48,50){$(T,C)=$}
      \put(76,11){$(T',C'')$}
    \end{overpic}
  \caption{Connecting $(\beta,C')$ and $(T',C'')$ via a virtual crossing}
  \label{fig:construction}
\end{figure}

\underline{Case 2: $i$ is odd.} 
Since $\Delta(v)=\lambda-1$ is even, 
we may write $\lambda=2n+1$ for some $n\in\Z$. 
Consider the vectors $v',v''\in\Z^{2m}$ defined by 
\begin{align*}
v'&=(0,\dots,0,1-n,\underset{\text{$i$-th}}{\underline{n}},1,\dots,1) \text{ and}\\
v''&=(0,\dots,0,1-n,0,\underset{\text{$i$-th}}{\underline{n}},1,\dots,1). 
\end{align*}
Similarly to Case 1, 
since $v\cdot\beta=v'$ for some $\beta\in B_{2m}$ 
and $\Delta(v'')=0$, 
we can construct a $\Z$-colored tangle diagram $(T,C)$ 
with $\partial(T,C)=v$. 
Moreover, $T$ satisfies $\mathrm{P}_2$ and $\mathrm{P}_3$. 
\end{proof}

\begin{proof}[Proof of {\rm Corollary~\ref{cor13}}.] 
Let $v=(a_1,\dots,a_{2m})\in(\Z/p\Z)^{2m}$. 

For $m=1$, 
the required $\Z/p\Z$-colored diagram $(T,C)$ 
with a single virtual crossing 
is shown in Figure~\ref{fig:1-tangle}. 
Since $p\geq 3$ is odd, 
$\frac{p+1}{2}\geq2$ is an integer, 
and the coloring is well-defined.

\begin{figure}[htb]
  \centering
    \begin{overpic}[]{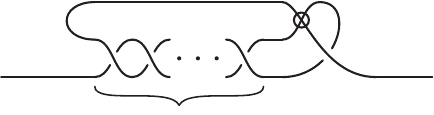}
      \put(20,43){$a_2$}
      \put(168.5,43){$a_2+\dfrac{p+1}{2}(a_2-a_1)$}
      \put(-12,16){$a_1$}
      \put(214,16){$a_2$}
      \put(140,7){$\nwarrow$}
      \put(141,-5){$a_2+\dfrac{p-1}{2}(a_2-a_1)$}
      \put(74.25,-12){$\dfrac{p+1}{2}$}
    \end{overpic}
  \vspace{1em}
  \caption{The $1$-string tangle diagram with a virtual crossing}
  \label{fig:1-tangle}
\end{figure}

For $m\geq2$, 
choose integers $b_i\in\Z$ ($1\leq i\leq 2m$)
such that $b_i=a_i\in\Z/p\Z$. 
We may assume that the vector $w=(b_{1},\dots,b_{2m})\in\Z^{2m}$ 
contains both even and odd integers, 
by replacing $b_{i}$ with $b_{i}+p$ if necessary. 
Moreover, the number of odd integers among $b_{1},\dots,b_{2m}$ is assumed to be even. 

By assumption, $k(w)=0$ 
and $\Delta(w)$ is even. 
By Theorem~\ref{thm12}, 
there exists a $\Z$-colored 
virtual tangle diagram $(T,C)$ such that 
$\partial(T,C)=w$ and 
$T$ satisfies the properties P$_{2}$ and P$_{3}$. 
Let $\pi:\Z\rightarrow\Z/p\Z$ denote the natural projection. 
Then $(T,\pi\circ C)$ is the desired $\Z/p\Z$-colored tangle 
with $\partial(T,\pi\circ C)=v$.
\end{proof}

We conclude this paper with two remarks. 

\begin{remark}\label{rem45}
The following results provide 
an effective algorithm to determine whether 
a given vector $v=(a_1,\dots,a_{2m})\in\Z^{2m}$ 
can be realized as the boundary color vector 
of some $\Z$-colored virtual tangle diagram. 
The proof follows immediately from 
Theorem~\ref{thm12}. 

\begin{enumerate}
\item
Suppose that $a_1,\dots,a_{2m}$ are all odd. 
Then there exists a $\Z$-colored virtual tangle diagram 
$(T,C)$ such that $\partial (T,C)=v$ 
if and only if such a diagram exists 
for $(a_1-1,\dots,a_{2m}-1)$. 
\item
Suppose that $a_1,\dots,a_{2m}$ are all even. 
Then there exists a $\Z$-colored virtual tangle diagram 
$(T,C)$ such that $\partial (T,C)=v$ 
if and only if such a diagram exists 
for $(\frac{a_1}{2},\dots,\frac{a_{2m}}{2})$. 
\item
Suppose that $v$ contains both even and odd integers. 
Then there exists a $\Z$-colored virtual tangle diagram 
$(T,C)$ such that $\partial (T,C)=v$ 
if and only if 
the number of odd integers among $a_1,\dots,a_{2m}$ is even. 
\end{enumerate}
\end{remark}

\begin{remark}\label{rem46}
We consider the case where a tangle diagram 
is allowed to contain \emph{loop components}. 
For $R = \mathbb{Z}$ or $\mathbb{Z}/p\mathbb{Z}$, 
the realizability condition for $v\in R^{2m}$ 
by an $R$-colored classical tangle diagram (possibly with loops) 
remains $\Delta(v) = 0$.

In contrast, for ${\Z}$-colored virtual tangle diagrams, 
the realizability condition is different from Theorem~\ref{thm12}: 
For $v=(a_1,\dots,a_{2m})\in{\Z}^{2m}$, 
the following are equivalent. 
\begin{enumerate}
\item
There exists a ${\Z}$-colored virtual tangle diagram $(T,C)$ 
with possible loops such that $\partial(T,C)=v$. 
\item
The number of odd integers among $a_1,\dots,a_{2m}$ is even. 
\end{enumerate}

Indeed, if $v=\partial(T,C)$, 
then the colors at the endpoints of the same string of $T$
have the same parity, 
since (ii)$_0$ in the proof of Lemma~\ref{lem41} still holds 
with loop components. 
Conversely, if the number of odd entries in $v$ is even, 
 then $\Delta(v)$ is even. 
By defining 
\[a_{2m}'=a_{2m}+\Delta(v) \mbox{ and }
w=(a_1,\dots,a_{2m-1}, a_{2m}'),\] 
we have $\Delta(w)=0$. 
By Theorem~\ref{thm12}, 
there exists a ${\Z}$-colored classical tangle diagram 
$(T',C')$ with $\partial(T',C')=w$. 
The construction for the required ${\Z}$-colored diagram $(T,C)$ 
with one loop and one virtual crossing 
is shown in Figure~\ref{fig:loop}. 
\end{remark}

\begin{figure}[htb]
  \centering
    \begin{overpic}[]{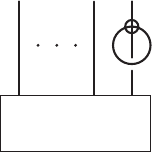}
      \put(4,78){$a_1$}
      \put(29,78){$a_{2m-1}$}
      \put(60,78){$a_{2m}$}
      \put(89,78){$\leftarrow v$}
      \put(76,51){$a_{2m}+\dfrac{\Delta(v)}{2}$}
      \put(67,33.5){$a_{2m}'$}
      \put(89,32){$\leftarrow w$}
      \put(19.5,11.5){$(T',C')$}
      \put(-40,49.2){$(T,C)=$}
    \end{overpic}
  \caption{The tangle diagram with a loop and a virtual crossing}
  \label{fig:loop}
\end{figure}

\begin{acknowledgements}
The authors would like to express their sincere gratitude 
to Professor Kouki Taniyama for his insightful suggestions. 
In particular, he provided an elementary proof of the fact that  
any knot admits only trivial ${\Z}$-colorings---consistent with 
the well-known property of odd determinants---by 
rewriting the coloring condition at each crossing as $a-c=2(b-c)$. 
This perspective, showing that $a-c$ is divisible by $2^s$ 
for any positive integer $s$ and thus $a=c$, 
was fundamental to the development of Lemma~\ref{lem41}. 
\end{acknowledgements}



\end{document}